\def\Ddots{\mathinner{\mkern1mu\raise\p@
\vbox{\kern7\p@\hbox{.}}\mkern2mu
\raise4\p@\hbox{.}\mkern2mu\raise7\p@\hbox{.}\mkern1mu}}
\def \ind{_{n \in {\mbox{\rm {\scriptsize I$\!$N}}}}}
\newtheorem{theorem}{Theorem}
\newtheorem{lm}[theorem]{Lemma}
\newtheorem{cl}[theorem]{Corollary}
\newtheorem{pp}[theorem]{Proposition}
\newtheorem{ex}{Example}
\newtheorem{df}{Definition}
\begin{document}

\title{Clustering words}

\author[S. Ferenczi]{S\'ebastien Ferenczi}
\address{Institut de Math\'ematiques de Luminy\\ CNRS - FRE 3529\\
Case 907, 163 av. de Luminy\\
F13288 Marseille Cedex 9, France  \indent and \indent
 F\'ed\'eration de Recherche des Unit\'es de Math\'ematiques de Marseille\\ 
 CNRS - FR 2291}
\email{ferenczi@iml.univ-mrs.fr}

\author[L.Q. Zamboni]{Luca Q. Zamboni}
\address{Institut Camille Jordan\\
Universit\'e Claude Bernard Lyon 1\\
43 boulevard du 11 novembre 1918\\
F69622 Villeurbanne Cedex, France \indent and \indent Department of Mathematics and
                        Turku Centre for Computer Science,
                       University of Turku,
                        20014 Turku, Finland.}
\email{zamboni@math.univ-lyon1.fr}

\subjclass[2000]{Primary 68R15}
\date{April 6, 2012}

\begin{abstract} We characterize words  which cluster under the Burrows-Wheeler transform as those words  $w$  such that $ww$ occurs in a trajectory of an interval exchange transformation, and build examples of clustering words. \end{abstract}

\maketitle

In 1994 Michael Burrows and David Wheeler \cite{bw} introduced a  transformation on words which proved very powerful in
data compression.
The aim of the present note is to characterize those words which cluster under the Burrows-Wheeler transform, that is to say which are transformed into such expressions as $4^a3^b2^c1^d$ or $2^a5^b3^c1^d4^e.$ Clustering words on a binary alphabet have already  been extensively studied (see for instance in \cite{jz,rr3}) and identified as particular factors of the Sturmian words. Some generalizations to $r$ letters appear in \cite{rr3}, but it  had not yet been observed that clustering words are intrinsically related to interval exchange transformations (see Definitions \ref{d1} and  \ref{d2} below). This link comes essentially from the fact  that the array of conjugates used to define the Burrows-Wheeler transform gives rise to a {\it discrete} interval exchange transformation sending its first column to its last column. It turns out that the converse is also true: interval exchange transformations generate clustering words. Indeed  we prove that clustering words are exactly those words $w$ such that $ww$ occurs in a trajectory of an interval exchange transformation. On a binary letter alphabet, this condition amounts to saying that  $ww$ is a factor of an infinite Sturmian word. We end the paper by some examples and questions on how to generate clustering words.\\

This paper began during a workshop on board Via Rail Canada train number 2. We are grateful to  Laboratoire International Franco-Qu\' eb\' ecois de Recherche en Combinatoire (LIRCO) for funding and Via for providing optimal working conditions. The second author is partially supported by a grant from the Academy of Finland.

\section{Definitions}

Let  $A=\{a_1<a_2< \cdots <a_r\}$ be an ordered alphabet and $w=w_1\cdots w_n$  a {\em primitive} word on the alphabet $A,$ i.e. $w$ is not a power of another word. For simplification we suppose that {\em each letter of $A$ occurs in $w$}. \\

The {\em Parikh vector} of $w$ is the integer vector $(n_1,\ldots ,n_k)$ where $n_i$ is the number of occurrences of $a_i$ in $w$. The {\em (cyclic) conjugates} of $w$ are  the words $w_i\cdots w_nw_1\cdots w_{i-1}$, $1\leq i\leq n.$ As $w$ is primitive,  $w$ has precisely $n$-cyclic conjugates.  Let $w_{i,1}w_{i,2}\cdots w_{i,n}$ denote the $i$-th  conjugate of $w$ where the $n$-conjugates of $w$ are ordered in  ascending lexicographical order.
 Then the {\em  Burrows-Wheeler transform} of $w$, denoted by $B(w)$, is the word $w_{1,n}w_{2,n}\cdots w_{n,n}.$
 In other words, $B(w)$ is obtained from $w$ by first ordering its cyclic conjugates in ascending order in a rectangular array, and then reading off the last column.
 We say $w$ is $\pi${\em -clustering} if $B(w)=a_{\pi 1}^{n_{\pi 1}}\cdots a_{\pi r}^{n_{\pi r}}$, where $\pi\neq Id$ is a permutation on $\{1, \ldots ,r\}$. We say $w$ is {\em perfectly} clustering if it is $\pi$-clustering for $\pi i=r+1-i$, $1\leq i\leq r$.

\begin{df}\label{d1}\rm{A (continuous) {\it $r$-interval exchange transformation}
 $T$
with probability vector \\
$(\alpha _1,\alpha _2,\ldots ,\alpha _r)$,
 and
permutation $\pi$ is defined on the interval $[0,1[$, partitioned into $r$ intervals
$$\Delta_i=\left[ \sum_{j<i}\alpha_{j}
,\sum_{j\leq i}\alpha_{j}\right[,$$
by $$Tx=x+\tau_i \quad \mbox{when} \quad x\in\Delta_i,$$ where $\tau_i
=\sum_{\pi^{-1}(j)<\pi^{-1}(i)}\alpha_{j}-\sum_{j<i}\alpha_{j}$. }\end{df}

Intuitively this means that the intervals $\Delta_i$ are re-ordered by $T$ following the permutation $\pi$. We refer the reader to  \cite{v} which constitutes a classical course on general interval exchange transformations and contains many of the technical terms found in Section \ref{s3}  below. Note that our use of the word ``continuous" does not imply that $T$ is a continuous map on $[0,1[$ (though it can be modified to be made so); it is there to emphasize the difference with its discrete analogous.

\begin{df}\label{d2} \rm{A {\it discrete $r$-interval exchange transformation}
 $T$
with length vector $(n _1,n _2,\ldots ,n_r)$,
 and
permutation $\pi$ is defined on  a set of $n_1+\cdots +n_r$ points $x_1$, \ldots , $x_{n_1+\cdots +n_r}$
partitioned into $r$ intervals
$$\Delta_i=\{x_k,  \sum_{j<i}n_{j} < k \leq
\sum_{j\leq i}n_{j}\}$$
 by
$$Tx_k=x_{k+s_i} \quad \mbox{when} \quad x_k\in\Delta_i,$$ where $s_i=\sum_{\pi^{-1}(j)<\pi^{-1}(i)}n_{j}-\sum_{j<i}n_{j}$.}\end{df}

We recall the following notions, defined for any transformation $T$ on a set $X$ equipped with a partition $\Delta_i$, $1\leq i\leq r$.

\begin{df}  \rm{The {\it trajectory}  of a point $x$ under $T$ is the infinite
sequence
$(x_{n})\ind $ defined by $x_{n}=i$ if $T^nx$ belongs to
$\Delta_i$, $1\leq i\leq r$.  The mapping $T$  is {\it minimal} if whenever $E$ is a nonempty closed subset of $X$ and $T^{-1}E=E$, then $E=X$.}\end{df}

\section{Main result}
\begin{theorem}\label{t1} Let $w=w_1\cdots w_n$ be a primitive word on $A=\{ 1,\ldots ,r\}$, such that every letter of $A$ occurs in $w$. The following are equivalent:
\begin{enumerate}  \item  $w$ is $\pi$-clustering, 
\item $ww$ occurs in a trajectory of a minimal discrete $r$-interval exchange transformation with permutation $\pi$, 
\item $ww$ occurs in a trajectory of a discrete $r$-interval exchange transformation with permutation $\pi$, 
\item $ww$ occurs in a trajectory of a continuous $r$-interval exchange transformation with permutation $\pi$.
\end{enumerate}
\end{theorem}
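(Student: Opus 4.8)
The plan is to prove the cycle of implications $(1)\Rightarrow(2)\Rightarrow(3)\Rightarrow(4)\Rightarrow(1)$, with the passage through the discrete interval exchange being the conceptual heart. The implications $(2)\Rightarrow(3)$ is trivial (minimality is just being dropped). For $(1)\Rightarrow(2)$, I would start from the Burrows--Wheeler array of $w$: list the $n$ cyclic conjugates of $w$ in increasing lexicographic order, and let $x_1,\ldots,x_n$ denote the rows. The first column is, reading top to bottom, a sorted (hence clustered as $a_1^{n_1}\cdots a_r^{n_r}$) word, and by hypothesis the last column is $a_{\pi 1}^{n_{\pi 1}}\cdots a_{\pi r}^{n_{\pi r}}$. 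The key observation is that the map $S$ sending the conjugate $w_iw_{i+1}\cdots w_{i-1}$ to the conjugate $w_{i+1}\cdots w_{i-1}w_i$ (cyclic shift of the word one step to the left) is, after sorting the rows lexicographically, exactly a discrete $r$-interval exchange on $\{x_1,\ldots,x_n\}$ with length vector $(n_1,\ldots,n_r)$ and permutation $\pi$: the block of rows beginning with $a_i$ (an interval $\Delta_i$ in the sorted order) is sent to the block of rows \emph{ending} with $a_i$, and these end-blocks are arranged in the order dictated by $\pi$. The standard LF-mapping / last-to-first argument used in the correctness proof of the Burrows--Wheeler transform shows $S$ is well defined and order-preserving within each block, so it is genuinely of the form $x_k\mapsto x_{k+s_i}$. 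Minimality of this discrete exchange follows from primitivity of $w$: the orbit of any row under $S$ cycles through all $n$ conjugates. Finally, the trajectory of the row $x_{k_0}$ equal to $w$ itself records, at time $0$ through $n-1$, the letters $w_1,\ldots,w_n$ (the first-column entries of $x_{k_0},Sx_{k_0},\ldots$), and since the orbit is periodic of period $n$ the trajectory is $(ww w\cdots)$; in particular $ww$ occurs in it.

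For $(3)\Rightarrow(4)$, I would take a discrete $r$-interval exchange $T$ on points $x_1<\cdots<x_N$ with length vector $(m_1,\ldots,m_r)$ and permutation $\pi$ in whose trajectory $ww$ occurs, and ``inflate'' it to a continuous one: place the points $x_k$ at the dyadic-type coordinates $k/N$, or better, realize $[0,1[$ as the union of $N$ equal subintervals $J_k=[(k-1)/N,k/N[$ and let $\hat T$ be the continuous $r$-interval exchange with probability vector $(m_1/N,\ldots,m_r/N)$ and the same permutation $\pi$. Then $\hat T$ maps $J_k$ isometrically onto $J_{Tk}$, so the trajectory of any point of $J_k$ under $\hat T$, with respect to the partition into the $\Delta_i$, agrees with the $T$-trajectory of $x_k$; hence $ww$ occurs in a trajectory of $\hat T$.

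The implication $(4)\Rightarrow(1)$ runs the $(1)\Rightarrow(2)$ argument in reverse and is where I expect the main obstacle. Suppose $ww$ occurs in the trajectory of a point $y$ under a continuous $r$-interval exchange $T$ with permutation $\pi$, say $T^\ell y,\ldots,T^{\ell+2n-1}y$ visit $\Delta_{w_1},\ldots,\Delta_{w_n},\Delta_{w_1},\ldots,\Delta_{w_n}$. Set $z=T^\ell y$; the points $z,Tz,\ldots,T^{n-1}z$ are $n$ distinct points (if two coincided the trajectory would be periodic of period dividing $n$, contradicting... — here one must be careful and use that $ww$, not just $w$, occurs, together with primitivity of $w$, to rule out a shorter period) whose itinerary over the next $n$ steps is again $w$. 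I want to sort $\{z,Tz,\ldots,T^{n-1}z\}$ by the usual real order on $[0,1[$ and argue that this sorted list, with the induced labels, \emph{is} the Burrows--Wheeler array of $w$: the crucial point is that for an interval exchange, two points in the same $\Delta_i$ stay in the same relative order after applying $T$ within that fixed half-interval, but across different $\Delta_i$'s the order is governed by $\pi$ — so the real order of the orbit points refines, and in fact is determined by, the lexicographic order of their infinite itineraries, which here are all eventually equal to $w^\infty$ hence determined by their finite prefixes of length $n$, i.e.\ by the cyclic conjugates of $w$. One then reads off that the first column of the sorted array is the itinerary-first-letters in increasing order (a sorted word), and the action $z\mapsto Tz$ is the left cyclic shift on conjugates, whose effect on the sorted array's last column is precisely the clustering pattern $a_{\pi 1}^{n_{\pi 1}}\cdots a_{\pi r}^{n_{\pi r}}$, giving $(1)$. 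The delicate steps to get right are (a) that the $n$ orbit points are genuinely distinct and their itineraries are exactly the $n$ cyclic conjugates of $w$ — this needs primitivity and the doubling $ww$ — and (b) that real order on the orbit matches lexicographic order on itineraries, which uses the order-preserving-within-$\Delta_i$ structure of interval exchanges; once these are in hand the clustering conclusion is a direct transcription of Definitions \ref{d1} and \ref{d2}.
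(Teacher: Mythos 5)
Your proposal is correct, and three of the four implications ($(2)\Rightarrow(3)$, $(3)\Rightarrow(4)$ via inflating the discrete points to equal subintervals, and $(4)\Rightarrow(1)$ via sorting the orbit $\{z,\dots,T^{n-1}z\}$ and matching real order with lexicographic order of length-$n$ itineraries) coincide with the paper's arguments. The one place where you genuinely diverge is $(1)\Rightarrow(2)$. The paper first proves that the last-to-first map on the Burrows--Wheeler array is a discrete $r$-interval exchange with permutation $\pi$ (its Lemma \ref{int}, which is your $S$), then deduces minimality by a contrapositive (Corollary \ref{srj}: a nontrivial invariant set would force $w$ to omit letters), extracts the periodic trajectory $w'$ of this minimal map, checks $w'$ is primitive, applies the already-proved implication $(2)\Rightarrow(1)$ to get $B(w')=B(w)$, and finally invokes injectivity of $B$ on conjugacy classes (Lemma \ref{inj}) to conclude $w'$ is conjugate to $w$. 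You shortcut all of this: since $S$ acts on the sorted conjugates as the cyclic left shift, the orbit of any row is a single $n$-cycle (giving minimality directly from primitivity), and the trajectory of the row equal to $w$ itself is literally $w^{\infty}$, so $ww$ occurs without any appeal to injectivity of the transform or to the reverse implication. This is a real simplification that dispenses with two of the paper's lemmas; what the paper's longer route buys in exchange is the standalone Corollary \ref{srj} on non-surjectivity of $B$, which it reuses later in the discussion of non-primitive words.

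Two small points to tighten. First, in $(4)\Rightarrow(1)$ you write that the infinite itineraries are ``eventually equal to $w^\infty$''; for an aperiodic continuous exchange nothing controls the itinerary beyond time $2n$, so you should rely only (as you in fact do a line later) on the length-$n$ prefixes, which are the $n$ distinct conjugates and hence already determine the sort. Second, be explicit that the end-blocks of $S$ land in the left-to-right order $a_{\pi 1},\dots,a_{\pi r}$ precisely because of the clustering hypothesis on the last column; that is the only place $(1)$ is used, and it is what makes $S$ an exchange with permutation $\pi$ in the sense of Definition \ref{d2} rather than merely a permutation of the rows.
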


\begin{proof}{($(2)$, $(3)$ or  $(4)$  implies $(1)$)}
By assumption there exists a point $x$ whose initial trajectory of length $2n$ is the word $ww.$ Consider the set $E=\{Tx, T^2x, \ldots , T^nx\}.$ Then for each $y\in E,$ the initial trajectory of $y$ of length $n,$ denoted $O(y),$
 is a cyclic conjugate of $w$. 
 
 Suppose $y$ and $z$ are in $E$, and $y$ is  to the left of $z$ (meaning $y<z.$). Let $j$ be the smallest nonnegative integer such that $T^jy$ and $T^jz$ are not in the same $\Delta_i.$ Then $T^{j}y$ is to the left of  $T^{j}z$, either because $j=0$ or because $T$ is increasing on each $\Delta_i.$ Thus $O(y)$ is lexicographically smaller that $O(z).$  
 
Thus $B(w)$ is obtained from the last letter  $l(y)$ of $O(y)$ where the points $y$ are ordered from left to right. But $l(y)$  is the label of the interval $\Delta_i$ where $T^{n-1}y$, or equivalently $T^{-1}y$, falls. Thus by definition of $T$, if $y$ is to the left of $z$ then $\pi^{-1}(l(y))\leq \pi^{-1}(l(z))$, and if $y'$ is between $y$ and $z$ with $l(y)=l(z)$, then $l(y')=l(y)=l(z)$, hence the claimed result.\end{proof}
 
\begin{proof}{($(2)$ implies  $(3)$ implies $(4)$)}
The first implication is trivial. The  second follows from the fact that the trajectories of the discrete $r$-interval exchange transformation with length vector $(n _1,n _2,\ldots ,n_r)$,
 and
permutation $\pi$, and of the continous $r$-interval exchange transformation with probability vector $(\frac{n_1}{n_1+\cdots +n_r}, \ldots ,\frac{n_r}{n_1+\cdots + n_r})$ 
 and
permutation $\pi$ are the same. We note that this continuous interval exchange transformation is never minimal, while the discrete one may be. \end{proof}

We now  turn to the proof of the converse, which uses a succession of lemmas. Throughout this proof, unless otherwise stated, a given word $w$ is a primitive word on $\{ 1,\ldots ,r\}$, and  every letter of $\{ 1,\ldots ,r\}$ occurs in $w$; $(n_1,\ldots ,n_r)$ is its Parikh vector, the $w_{i,1}\cdots w_{i,n}$ are its conjugates. 

The first lemma states that $B$ is injective on the conjugacy classes, which is proved for example in \cite{etc} or \cite{rr1}; we give here a short proof for sake of completeness.

\begin{lm}\label{inj} Every antecedent of $B(w)$ by the Burrows-Wheeler transform is conjugate to $w$.\end{lm}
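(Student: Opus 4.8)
The plan is to reconstruct, from the word $B(w)$ alone, the entire sorted array of conjugates of $w$, thereby recovering $w$ up to conjugacy. Suppose $u$ is any primitive word with $B(u) = B(w)$. First I would observe that $B(u)$ determines the multiset of letters appearing in the array, hence the Parikh vector $(n_1,\ldots,n_r)$, and therefore also the first column of the sorted array of conjugates: since the rows are the conjugates sorted in ascending lexicographical order, the first column must be the nondecreasing rearrangement of the letters, i.e. $a_1^{n_1}a_2^{n_2}\cdots a_r^{n_r}$. So both the first column $F$ and the last column $L = B(u)$ are known, and they are known to be equal as multisets.

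The key combinatorial step is the standard \emph{LF-mapping} (or ``last-to-first'') argument. For each letter $a$, the occurrences of $a$ in the first column $F$ correspond, \emph{in the same top-to-bottom order}, to the occurrences of $a$ in the last column $L$. The reason is exactly the lexicographic-ordering observation already used in the proof that $(2)$ implies $(1)$: if rows $i < j$ both end in the letter $a$, then deleting that last letter and moving it to the front produces two new conjugates whose first letter is $a$ and whose remaining suffixes are ordered the same way as the original rows were (since a common first letter does not affect the comparison of what follows), so these sit in the same relative order among the rows beginning with $a$. This yields a well-defined bijection $\sigma$ from row indices to row indices: $\sigma(i)$ is the row whose conjugate is the cyclic shift of row $i$ that moves $L_i$ to the front. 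Concretely, if $L_i$ is the $k$-th occurrence of the letter $a$ in $L$, then $\sigma(i)$ is the index of the $k$-th occurrence of $a$ in $F$; this is computable purely from $L$ (and the derived $F$).

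Now I would run the cycle structure of $\sigma$. Pick any starting row $i_0$; then the sequence $L_{i_0}, L_{\sigma^{-1}(i_0)}, L_{\sigma^{-2}(i_0)}, \ldots$ spells out a cyclic conjugate of $u$, because each application of $\sigma^{-1}$ peels off one more letter going backwards through a single conjugate, and reading $L$ along a $\sigma$-orbit reconstructs that conjugate letter by letter. Since $u$ is primitive, $\sigma$ is a single $n$-cycle and this process recovers one conjugate of $u$ completely; every conjugate of $u$ arises this way, and the whole array is thus determined by $L = B(u) = B(w)$. But the same construction applied to $w$ gives the array of conjugates of $w$, and since the two arrays are built by the identical recipe from the identical data $B(w)$, they coincide; hence $u$ is a conjugate of $w$.

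The main obstacle is being careful about the claim that $\sigma$ is well-defined and really does what I said — i.e. that the ``$k$-th occurrence in $L$ corresponds to the $k$-th occurrence in $F$'' order-preservation holds, and that the $\sigma$-orbit genuinely reads off a conjugate rather than some spurious word. Both points reduce to the lexicographic monotonicity already established (rows sorted ascending, and prepending a common letter preserves relative order), so the argument is essentially a matter of bookkeeping; one also needs primitivity of $u$ to guarantee that $\sigma$ is an $n$-cycle, so that a single orbit sweeps out a full conjugate of length $n$ and the reconstruction is unambiguous. I would present this cleanly by first proving the order-preservation lemma for $\sigma$, then defining the reconstruction from $\sigma$, and only then concluding injectivity.
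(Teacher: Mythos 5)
Your proof is correct, but it inverts the Burrows--Wheeler transform by a different mechanism than the paper does. The paper reconstructs the sorted array column by column: from $B(w)$ it recovers the Parikh vector, hence the first column $1^{n_1}\cdots r^{n_r}$; then it observes that the pairs $w_{n,j}w_{1,j}$ form exactly the multiset of length-$2$ cyclic factors, so sorting them yields the first two columns, and iterating this (sorting the words $w_{n,j}w_{1,j}\cdots w_{k,j}$) recovers every column and hence $w$ up to conjugacy --- no order-preservation lemma is needed, only the fact that each column of the sorted array is the sorted list of prefixes of the appropriate length. You instead use the last-to-first mapping $\sigma$ and follow a single $\sigma$-orbit to spell out one conjugate; this requires the extra lemma that occurrences of a letter appear in the same relative order in the first and last columns (which you correctly reduce to the fact that truncating the common last letter and prepending it preserves the lexicographic order of two rows, using primitivity to rule out rows agreeing except in the last position), and it requires primitivity again to ensure $\sigma$ is a single $n$-cycle. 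Both inversions are classical and both are complete proofs. Your route has the incidental virtue of making visible the permutation $w_{1,j}\mapsto w_{n,j}$ of Lemma \ref{int} --- your $\sigma$ is essentially its inverse read on row indices --- so it connects more directly to the discrete interval exchange structure that drives the rest of the paper, at the cost of two extra lemmas (order preservation and the single-cycle property) that the paper's iterated-sorting argument simply does not need.
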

\begin{proof}
In the array of the conjugates of $w$, each column word $w_{1,j}\cdots w_{n,j}$ has the same Parikh vector as $w$, so we retrieve this vector from $B(w)$; thus we know the first column word, which is $1^{n_1}\ldots r^{n_r}$, and the last column word which is $B(w)$. Then the words $w_{n,j}w_{1,j}$ are precisely all  words of length $2$ occurring in the conjugates of $w$, and by ordering them we get the first two columns of the array. Then  $w_{n,j}w_{1,j}w_{2,j}$ constitute all  words of length $3$ occurring in the conjugates of $w$, and  we get also the subsequent column, and so on until we have retrieved the whole array, thus $w$ up to conjugacy.\end{proof}

It is easy to see  that $B,$ viewed as a mapping from words to words, is not surjective (see for instance \cite{rr1}). A more precise result will be proved in Corollary \ref{srj} below.

\begin{lm}\label{int} If $w$ is $\pi$-clustering, the mapping $w_{1,j} \mapsto w_{n,j}$ defines a discrete $r$-interval exchange transformation
with length vector $(n _1,n _2,\ldots ,n_r)$,
 and
permutation $\pi$. \end{lm}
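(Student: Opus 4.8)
The plan is to identify the map in question with the left cyclic shift acting on the array of ordered conjugates, and then to read the $\pi$-clustering hypothesis as the assertion that this shift has exactly the block structure of Definition \ref{d2}. First I would take the $n$ points $x_1,\dots,x_n$ of Definition \ref{d2} to be the rows of the array, so that $x_k$ is the row $w_{k,1}\cdots w_{k,n}$. Since the conjugates are listed in increasing lexicographic order they fall into consecutive blocks according to their first letter, so the first column word is $1^{n_1}\cdots r^{n_r}$ (the observation already used in the proof of Lemma \ref{inj}); hence the partition prescribed by Definition \ref{d2}, $\Delta_i=\{x_k:\sum_{j<i}n_j<k\le\sum_{j\le i}n_j\}$, is exactly ``$\Delta_i$ is the set of rows whose first letter is $i$''. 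The map sending the $k$-th entry of the first column to the entry of the last column recording the same occurrence of the same letter then sends the row $x_k$ to the row whose conjugate is the left cyclic shift $w_{k,2}\cdots w_{k,n}w_{k,1}$ of the $k$-th conjugate; call this map $T$. As $w$ is primitive its $n$ conjugates are pairwise distinct, so the cyclic shift permutes the rows and $T$ is a bijection of $\{x_1,\dots,x_n\}$; I would also record that the trajectory of $x_k$ under $T$ is the purely periodic word $(w_{k,1}\cdots w_{k,n})^{\infty}$, which will be convenient in the sequel.

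Next I would establish the order-preservation property: for each letter $c$, the map $T$ carries $\Delta_c$, the block of rows beginning with $c$, onto the set of rows ending in $c$, and does so in an order-preserving way. Writing two rows beginning with $c$ as $cu$ and $cu'$ with $|u|=|u'|=n-1$, the row $cu$ precedes $cu'$ in the array if and only if $u<_{\mathrm{lex}}u'$, while $T(cu)=uc$ and $T(cu')=u'c$. Since the two rows are distinct, $u\neq u'$, so the inequality $u<_{\mathrm{lex}}u'$ is decided at a position lying inside $u$ and $u'$, and the very same position shows $uc<_{\mathrm{lex}}u'c$; thus $T$ preserves the order on $\Delta_c$. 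Since $T$ is a bijection that sends $\Delta_c$ into the set of rows ending in $c$, and since both the blocks $\Delta_c$ and the sets ``rows ending in $c$'' $(c=1,\dots,r)$ partition $\{x_1,\dots,x_n\}$, $T$ must send each $\Delta_c$ bijectively, and order-preservingly, onto the whole set of rows ending in $c$.

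Finally I would invoke the hypothesis. The last column word of the array is $B(w)$, which by $\pi$-clustering equals $(\pi1)^{n_{\pi1}}(\pi2)^{n_{\pi2}}\cdots(\pi r)^{n_{\pi r}}$; hence, for each letter $c$, the rows ending in $c$ are precisely the points $x_k$ with $\sum_{\pi^{-1}(j)<\pi^{-1}(c)}n_j<k\le\sum_{\pi^{-1}(j)\le\pi^{-1}(c)}n_j$, a consecutive block of $n_c$ points beginning at offset $\sum_{\pi^{-1}(j)<\pi^{-1}(c)}n_j$. By the previous paragraph, $T$ carries the block $\Delta_c$, which begins at offset $\sum_{j<c}n_j$, order-preservingly onto this block; an order-preserving bijection between two integer intervals of equal length is a translation, so $Tx_k=x_{k+s_c}$ for $x_k\in\Delta_c$, with $s_c=\sum_{\pi^{-1}(j)<\pi^{-1}(c)}n_j-\sum_{j<c}n_j$. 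This is exactly the formula of Definition \ref{d2}, so $T$ is the discrete $r$-interval exchange transformation with length vector $(n_1,\dots,n_r)$ and permutation $\pi$.

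The only genuinely substantive step is the order-preservation of the cyclic shift on the array: it is the discrete counterpart of the fact ``$T$ is increasing on each $\Delta_i$'' exploited in the already-proved direction of Theorem \ref{t1}, and it is the classical observation underlying the reversibility of the Burrows--Wheeler transform. Everything else --- identifying the first column word, reading the blocks of the last column off the clustered form of $B(w)$, and matching the translation amounts against Definition \ref{d2} --- is routine bookkeeping.
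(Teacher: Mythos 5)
Your proof is correct, and at bottom it is the same argument as the paper's: the paper indexes the $n$ points by the occurrences of letters in $w$ (ordered by the lexicographic rank of the conjugate beginning at each occurrence) while you index them by the rows of the array, and these two indexings are in canonical bijection; the one substantive step is identical in both versions, namely that two distinct conjugates with the same first letter compare lexicographically in the same way as their cyclic shifts, so the shift is order-preserving on each block of constant first letter. The only real divergence is the direction in which the ambiguous rule ``$w_{1,j}\mapsto w_{n,j}$'' is turned into a self-map of an $n$-point set. You resolve it by matching equal occurrences across the two columns, which makes the point map the left cyclic shift on rows; this is the map that literally satisfies Definition~\ref{d2} with length vector $(n_1,\ldots,n_r)$ and permutation $\pi$, as your explicit computation of the translation constants $s_c$ confirms. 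The paper instead sends the first-column occurrence of row $j$ to the last-column occurrence of the same row $j$, which is the inverse of your map; that inverse carries $\Delta_i$ onto a set which in general is not an interval (try $w=122131313$), so strictly speaking it is your reading that makes the lemma true as stated. The discrepancy is harmless for the way the lemma is used later in the paper, since minimality and invariant subsets are unchanged under passing to the inverse bijection, but your bookkeeping is the more faithful one.
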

\begin{proof} We order the occurrences of each letter in $w$ by putting $w_i<w_j$ if the conjugate\\ $w_i\cdots w_nw_1\cdots w_{i-1}$ is lexicographically smaller than $w_j\cdots w_nw_1\cdots w_{j-1}.$ By primitivity, the $n$ letters of $w$ are uniquely ordered as $$1_1< \cdots <1_{n_1}<2_1<\cdots <2_{n_2}<\cdots <r_1<\cdots <r_{n_r},$$ and the first column word is $1_1\cdots 1_{n_1}2_1 \cdots 2_{n_2}\cdots r_1\cdots r_{n_r}$. We look at the last column word: if $w_{n,j}$ and $w_{n,j+1}$ are both some letter $k,$ the order between these two occurrences of $k$ is given by the next letter in the conjugates of $w$, and these are respectively $w_{1,j}$ and $w_{1,j+1}.$ Thus $w_{n,j}<w_{n,j+1}.$  Together with the hypothesis, this implies that the last column word is $$(\pi 1)_1\cdots (\pi 1)_{n_{\pi 1}}\cdots  (\pi r)_1\cdots (\pi r)_{n_{\pi r}}.$$ Thus, if we regard the rule  $w_{1,j} \mapsto w_{n,j}$ as a mapping on the $n_1+\ldots+n_r$ points $$\{1_1,\ldots ,1_{n_1},2_1, \ldots 2_{n_2}, \ldots ,r_1,\ldots ,r_{n_r}\},$$  and put $\Delta_i=\{i_1,\ldots ,i_{n_i}\}$, we get the claimed result. \end{proof}

\begin{cl}\label{srj} If the discrete $r$-interval exchange transformation
 $T$
with length vector $(n _1,n _2,\ldots ,n_r)$,
 and
permutation $\pi$  is not minimal, the word $(\pi 1)^{n_{\pi 1}}\ldots (\pi r)^{n_{\pi r}}$ has no primitive antecedent by the Burrows-Wheeler transform. \end{cl}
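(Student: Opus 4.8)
The plan is to argue by contraposition: I will show that if the word $u = (\pi 1)^{n_{\pi 1}}\cdots(\pi r)^{n_{\pi r}}$ does have a primitive antecedent by the Burrows-Wheeler transform, then the discrete $r$-interval exchange transformation $T$ with length vector $(n_1,\ldots,n_r)$ and permutation $\pi$ is minimal. So suppose $w$ is a primitive word with $B(w) = u$. Since $u$ is already in clustered form, $w$ is $\pi$-clustering, and the Parikh vector of $w$ must be $(n_1,\ldots,n_r)$ (the Parikh vector is read off directly from $B(w)$, as noted in the proof of Lemma \ref{inj}). Hence Lemma \ref{int} applies to $w$: the rule $w_{1,j}\mapsto w_{n,j}$ defines a discrete $r$-interval exchange transformation with exactly the length vector $(n_1,\ldots,n_r)$ and permutation $\pi$, i.e.\ it defines (a copy of) $T$.

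The next step is to transfer minimality from the dynamics of $w$ to $T$. For a primitive word $w$ of length $n$, the cyclic shift $\sigma$ acting on the $n$ conjugates of $w$ is a single $n$-cycle, hence minimal (its only invariant closed — here, just nonempty — subset is the whole orbit). The identification in Lemma \ref{int} sends the $j$-th conjugate (in lexicographic order) to the point $w_{1,j}$ of the discrete interval exchange; I claim this identification conjugates the cyclic shift $\sigma$ on conjugates to the map $T$. Indeed, applying $\sigma$ to the conjugate $w_{i}\cdots w_{i-1}$ gives $w_{i+1}\cdots w_i$, and under the ordering used in Lemma \ref{int} this corresponds to advancing the occurrence $w_i$ of its letter to the next occurrence in the established order $1_1<\cdots<r_{n_r}$ — which is precisely how $T$ was defined there via $w_{1,j}\mapsto w_{n,j}$ read as "where does the first column go when we shift." Since $\sigma$ is minimal and minimality is preserved under conjugacy, $T$ is minimal, contradicting the hypothesis. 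Equivalently, and more concretely: the points $x_k$ of $T$ are exactly the conjugates of $w$, and the full $T$-orbit of any one of them is all $n$ conjugates, so no proper nonempty closed set is $T$-invariant.

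The main obstacle I anticipate is making the conjugacy between $\sigma$ and $T$ genuinely precise — specifically, checking that the lexicographic ordering of the conjugates, which governs both the columns of the Burrows-Wheeler array and the left-to-right order of the points of the interval exchange, is exactly compatible with the shift in the way claimed, including the behaviour at the "wrap-around" between blocks of equal letters. This is essentially the same bookkeeping already carried out in the proof of Lemma \ref{int} (the observation that $w_{n,j}<w_{n,j+1}$ whenever these are the same letter), so it should go through without new ideas; one just needs to note that this ordering data determines the whole cyclic structure, and a single $n$-cycle has no nontrivial invariant subsets. A clean alternative avoiding any delicate argument: since $B(w)=u$ has a primitive antecedent, Lemma \ref{inj} shows $w$ is the \emph{unique} conjugacy class mapping to $u$; then Theorem \ref{t1} (equivalence of $(1)$ and $(2)$), applied to $w$, already yields a \emph{minimal} discrete $r$-interval exchange with permutation $\pi$ realising $ww$ as a trajectory, and tracking the length vector through that construction shows it must be $T$ itself — but since the present Corollary is being proved before (and in service of) the harder direction of Theorem \ref{t1}, I would keep the self-contained argument via Lemmas \ref{inj} and \ref{int} above.
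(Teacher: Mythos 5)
Your argument is correct and is essentially the paper's own proof: both rely on Lemma \ref{int} to identify the map $w_{1,j}\mapsto w_{n,j}$ with $T$, and then observe that this map acts on the $n$ occurrences of letters of $w$ as a single $n$-cycle (the paper phrases this as ``any nonempty predecessor-closed set of occurrences must be everything''), so $T$ is minimal. Your framing by contraposition rather than contradiction, and the $\sigma$-versus-$\sigma^{-1}$ orientation quibble, are immaterial.
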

\begin{proof} Let $w$ be such an antecedent. By the previous lemma, the map $w_{1,j} \mapsto w_{n,j}$ corresponds to $T$.
If $T$ is not minimal, there is a proper subset $E$ of $\{1_1,\ldots ,1_{n_1},2_1, \ldots 2_{n_2}, \ldots ,r_1,\ldots ,r_{n_r}\}$ which is invariant by  $w_{1,j} \mapsto w_{n,j}.$  Thus in the conjugates of $w$, preceding any occurrence of a letter of $E$ is another occurrence of a letter of $E.$ This implies that $w$ is made up entirely  of letters of $E,$ a contradiction. \end{proof}

 \begin{proof}{($(1)$ implies  $(2)$)} Let $w$ be as in the hypothesis. Then $B(w)=(\pi 1)^{n_{\pi 1}}\cdots (\pi r)^{n_{\pi r}}$. Thus  the transformation $T$ of Lemma \ref{srj} is minimal, and thus has a periodic trajectory $w'w'w'\ldots$, where $w'$ has Parikh vector $(n_1,\ldots ,n_r)$. If $w'=u^{k}$, then $n_i=kn'_i$ for all $i,$  and the set made with the $n'_i$ leftmost points of each $\Delta_i$ is $T$-invariant, thus $w'$ must be primitive.  
 
By the proof, made above,  that  $(2)$ implies $(1)$, $w'$ is $\pi$-clustering. Hence $B(w')=B(w)$ and, by Lemma \ref{inj}, $w$ is conjugate to $w'$, hence $ww$ occurs also in a trajectory of $T$. \end{proof}

Some of the hypotheses of Theorem \ref{t1} may be weakened.\\

 {\bf Alphabet}. $\{ 1,\ldots ,r\}$ can be replaced by by any ordered set $A=\{a_1<a_2< \cdots <a_r\}$ by using a letter-to-letter morphism. Thus for a given word $w$, we can restrict the alphabet to the letters occurring in $w$. Note that if $ww$ occurs in a trajectory of an $r$-interval exchange transformation, but only the letters $j_1,\ldots ,j_d$ occur in $w$, then, by  the reasoning of the proof that $(4)$ implies $(1)$, $w$ is $\pi'$-clustering, where $\pi'$ is the unique permutation on $\{1,\ldots , d\}$ such that  $(\pi')^{-1}(y)< (\pi')'^{-1}(z)$ iff $\pi^{-1}(j_y)< \pi^{-1}(j_z)$. If $\pi$ is a permutation defining perfect clustering, then so is $\pi'$.\\

{\bf Primitivity}. The Burrows-Wheeler transformation can be extended to a non-primitive word $w_1\cdots w_n$, by ordering its $n$ (non necessarily different) conjugates $w_i\cdots w_nw_1\cdots w_{i-1}$ by non-strictly increasing lexicographical order and taking the word made by their last letters.

In this case the result of Lemma \ref{srj} does not extend: For example $B(1322313223)=3333222211$ though the discrete $3$-interval exchange transformation
with length vector $(2,2,4)$,
 and
permutation $\pi 1=3, \pi 2=2, \pi 3=1$  is not minimal. Note that if $(\pi 1)^{n_{\pi 1}}\cdots (\pi r)^{n_{\pi r}}$ has a non-primitive antecedent by the Burrows-Wheeler transform, then the $n_i$ have a common factor $k.$ There exist (see below) non-minimal discrete interval exchange transformations which do not satisfy that condition, and thus words such as $32221$ which have no antecedent at all by the Burrows-Wheeler transformation.

But our Theorem \ref{t1} is still valid for non-primitive words: the proof in the first direction does not use the primitivity, while in the reverse direction we  write $w=u^k$, apply our proof to the primitive $u$, and check that $u^{2k}$ occurs also in a trajectory.\\

{\bf Two permutations}. An extension of Theorem \ref{t1} which fails is to consider, as the dynamicians do \cite{v}, interval exchange transformations defined by permutations $\pi$ and $\pi'$; this amounts to coding the interval $\Delta_i$ by $\pi' i$ instead of $i$.  A simple counter-example will be clearer than a long definition: take points $x_1,\ldots ,x_9$ labelled $223331111$ and send them to $111133322$ by a (minimal) discrete $3$-interval exchange transformation, but where the points are not labelled  as in Definition 3 (namely $Tx_1=x_8$, $Tx_3=x_5$ etc...). Then $w=123131312$ is such that $ww$ occurs in trajectories of $T$ but $B(w)=323311112$.

\section{Building clustering words}\label{s3}

Theorem \ref{t1} provides two different ways to build clustering words, from infinite trajectories either of discrete (or rational) interval exchange transformations or of continuous aperiodic interval exchange transformations. For $r=2$ and the permutation  $\pi 1=2, \pi 2=1$, the first ones give all the periodic balanced words, and the second ones gives  (by Proposition \ref{idoc} below) all infinite Sturmian words:  both these ways of building clustering words on two letters are used, explicitly or implicitly, in   \cite{jz}.\\

The use of discrete interval exchange transformations leads naturally to the question of characterizing all minimal discrete $r$-interval exchange transformations through their length vector; this has been solved by \cite{pr} for $n=3$ and $\pi 1=3, \pi 2=2, \pi 3=1$: if the length vector  is $(n_1,n_2,n_3)$, minimality is equivalent to $(n_1+n_2)$ and $(n_2+n_3)$ being coprime. Thus 
 \begin{ex} The discrete interval exchange  $111122333 \to 333221111$, gives rise to the perfectly clustering word $122131313.$ \end{ex}
The same reasoning  extends to other permutations: for $\pi 1=2, \pi 2=3, \pi 3=1$, minimality is equivalent to $n_1$ and $(n_2+n_3)$
 being coprime; for $\pi 1=3, \pi 2=1, \pi 3=2$, minimality is equivalent to $n_3$ and $(n_2+n_1)$
 being coprime; for other permutation on these three letters, $T$ is never minimal.  
 
 For $r\geq 4$ intervals, the question is still open. An immediate equivalent condition for non-minimality is $\sum_{i=1}^m s_{w_i}=0$ for $m<n_1+\cdots + n_r$ and $w_1\cdots w_m$ a word occurring in a  trajectory. It is easy to build non-minimal examples satisfying such an equality for simple words $w$, for example for $r=4$ and $\pi 1=4, \pi 2=3, \pi 3=2, \pi 4=1$,  $n_1=n_2=n_3=1$ gives non-minimal examples for any value of $n_4$, the equality being satisfied for $w=24^q$ if $n_4=3q$, $w=14^{q+1}$ if $n_4=3q+1$, $w=34^q$ if $n_4=3q+2$. Similarly, the following example shows how we still do get clustering words, but they may be  somewhat trivial. 
 \begin{ex} The discrete interval exchange  $111233444 \to 444332111$ satisfies the above equality for $w=14$; it is non-minimal and gives two perfectly clustering words on smaller alphabets, $41$ and $323$. \end{ex}

To study continuous aperiodic interval exchange transformations we need a technical condition called {\em i.d.o.c.} \cite{kea} which states that {\em the orbits of the discontinuities of $T$ are infinite and disjoint}. It is proved in \cite{kea} or in \cite{v} that this condition implies aperiodicity and minimality, and that, if $\pi$ is {\em primitive}, i.e. $\pi \{1,\ldots ,d\} \neq \{1,\ldots ,d\}$ for $d<r$, then the $r$-interval exchange transformation
with probability vector 
$(\alpha _1,\ldots ,\alpha _r)$
 and
permutation $\pi$ satisfies the i.d.o.c. condition if $\alpha_1$, \ldots , $\alpha_r$ and $1$ are rationally independent. We can now prove

\begin{pp}\label{idoc} Let $w=w_1\cdots w_n$ be a primitive word on $A=\{ 1,\ldots ,r\}$, such that every letter of $A$ occurs in $w$. Then  $w$ is $\pi$-clustering if and only if 
 $ww$ occurs in a trajectory of a continuous $r$-interval exchange transformation with permutation $\pi$, satisfying the i.d.o.c. condition.
\end{pp}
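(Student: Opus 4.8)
The plan is to reduce everything to Theorem \ref{t1}, whose hard work is already done, and then to upgrade the continuous interval exchange transformation produced there to one satisfying the i.d.o.c. condition. The ``if'' direction is immediate: if $ww$ occurs in a trajectory of a continuous $r$-interval exchange transformation with permutation $\pi$ satisfying i.d.o.c., then in particular $ww$ occurs in a trajectory of \emph{some} continuous $r$-interval exchange transformation with permutation $\pi$, so condition (4) of Theorem \ref{t1} holds and $w$ is $\pi$-clustering. The entire content of the proposition is therefore in the ``only if'' direction.

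For the ``only if'' direction, assume $w$ is $\pi$-clustering. By the implication $(1)\Rightarrow(2)$ of Theorem \ref{t1} (and the reverse direction used inside its proof), the map $w_{1,j}\mapsto w_{n,j}$ of Lemma \ref{int} is a \emph{minimal} discrete $r$-interval exchange transformation with length vector $(n_1,\ldots,n_r)$ and permutation $\pi$, and $ww$ occurs in one of its trajectories. First I would observe that $\pi$ must be primitive: if $\pi\{1,\ldots,d\}=\{1,\ldots,d\}$ for some $d<r$, then the set $\Delta_1\cup\cdots\cup\Delta_d$ would be invariant under the discrete transformation, contradicting minimality (this is the same argument as in the proof of Corollary \ref{srj}). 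Now pass to the continuous picture as in the proof that $(2)\Rightarrow(3)\Rightarrow(4)$: the discrete transformation with length vector $(n_1,\ldots,n_r)$ and the continuous transformation $T_0$ with probability vector $(n_1/N,\ldots,n_r/N)$, where $N=n_1+\cdots+n_r$, have exactly the same trajectories, so $ww$ occurs in a trajectory of $T_0$. But $T_0$ has rational lengths, hence is periodic and emphatically \emph{not} i.d.o.c., so a perturbation argument is needed.

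The key step is therefore a \emph{robustness of finite trajectories under small perturbation of the lengths}. Fix the point $x$ whose trajectory under $T_0$ begins with $ww$, i.e. with a prescribed word of length $2n$. For a probability vector $\alpha=(\alpha_1,\ldots,\alpha_r)$ near $\alpha_0=(n_1/N,\ldots,n_r/N)$, let $T_\alpha$ be the $r$-interval exchange transformation with lengths $\alpha$ and the same permutation $\pi$. The first $2n$ iterates $T_\alpha^k x$, $0\le k\le 2n$, depend continuously (indeed piecewise-linearly) on $\alpha$, and for each $k$ the point $T_{\alpha_0}^k x$ lies in the \emph{interior} of the interval $\Delta_{i(k)}$ determined by the $k$-th letter of $ww$ — here I would need the mild observation that $x$ and its iterates can be chosen to avoid the finitely many discontinuity points of $T_{\alpha_0}$, which is possible precisely because $ww$, having length $2n$ with $w$ primitive of length $n$, is genuinely realized by a point, not merely by an endpoint; alternatively one chooses $x$ to be one of the $N$ points of the discrete orbit, rescaled, and notes these are interior points of the $\Delta_i$. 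Then by continuity there is an open neighborhood $U$ of $\alpha_0$ in the simplex such that for all $\alpha\in U$ and all $0\le k\le 2n$, the iterate $T_\alpha^k x$ still lies in $\Delta_{i(k)}$; hence $ww$ occurs in the trajectory of $x$ under $T_\alpha$. Finally, since $\pi$ is primitive, the set of $\alpha\in U$ for which $\alpha_1,\ldots,\alpha_r,1$ are rationally independent is nonempty (it is even a dense $G_\delta$, being the complement in $U$ of countably many affine hyperplanes), and by the cited result of \cite{kea,v} any such $T_\alpha$ satisfies the i.d.o.c. condition. Choosing one such $\alpha$ completes the proof.

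The main obstacle is the perturbation step, and within it the one genuinely delicate point is ensuring that the relevant iterates of $x$ avoid the discontinuities of $T_{\alpha_0}$, so that ``lies in $\Delta_{i(k)}$'' is an open condition rather than a boundary condition that could be destroyed by an arbitrarily small perturbation; everything else is either a direct appeal to Theorem \ref{t1}, to Lemma \ref{int}, or to the standard i.d.o.c. criterion quoted from \cite{kea} and \cite{v}.
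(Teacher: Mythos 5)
Your proposal is correct and follows essentially the same route as the paper: reduce to Theorem \ref{t1} to get a minimal discrete transformation (hence primitive $\pi$), pass to the continuous transformation with rational probability vector, and perturb the lengths to a rationally independent vector while preserving the occurrence of $ww$, invoking Keane's criterion for i.d.o.c. Your explicit continuity-of-iterates argument is just a spelled-out version of the paper's one-line observation that cylinders are nonempty semi-open intervals and hence survive a sufficiently small perturbation of the probability vector.
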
 
\begin{proof} The ``if" direction is as in Theorem \ref{t1}. To get the ``only if", we generate $w$ by a minimal discrete
interval exchange transformation as in $(2)$ of Theorem  \ref{t1}, and thus $\pi$ is primitive. Then we replace it by a continuous periodic interval exchange transformation as in the proof that $(3)$ implies $(4)$. But, because cylinders are always semi-open intervals, if a given word $ww$ occurs in a trajectory of a continuous  $r$-interval exchange transformation
 with permutation $\pi$ and  probability vector 
$(\alpha _1,\ldots ,\alpha _r)$, it occurs also in trajectories of every $r$-interval exchange transformation
 with the same permutation whose  probability vector is close enough to
$(\alpha _1,\ldots ,\alpha _r).$ Thus we can change the $\alpha_i$ to get the irrationality condition which implies the i.d.o.c. condition. \end{proof}

Trajectories of interval exchange transformations satisfying the i.d.o.c. condition
may be explicitly constructed via the {\em self-dual induction} algorithms of \cite{fhz2} for $r=3$ and $\pi 1=3, \pi 2=2, \pi 3=1$, \cite{fz1} for all $r$ and $\pi i=r+1-i$, and the forthcoming \cite{fie} in the most general case. More precisely, Proposition 4.1 of \cite{fz1} shows that if the permutation is $\pi i=r+1-i$ (or more generally if the permutation is in the  {\em Rauzy class} of $\pi i=r+1-i$), then there exist infinitely many words $ww$ in the trajectories. It also gives a sufficient condition for building such words: if a {\em bispecial} word $w$, a suffix $s$ and a prefix $p$ of $w$ are such that $pw=ws$, then both $pp$ and $ss$ occur in the trajectories. In turn, a recipe to achieve that relation is given in $(i)$ of Theorem 2.8 of \cite{fz1}: we just need that in the underlying algorithm described in Section 2.6 of \cite{fz1}, either $p_n(i)=i$ or $m_n(i)=i$ (except for some initial values of $n$, where, for $i=1$, $p$ and $s$ are longer than $w$). Many explicit examples of $ww$ have been built in this way.
  \begin{itemize}
\item For $r=3$  in \cite{fhz2}, $w=A_k$, $w=B_k$ in Proposition 2.10,
\begin{ex}  $13131312222$ and $122131222131221313$ are perfectly clustering.\end{ex}
\item For $r=4$ in \cite{fz2}, $w=M_2(k)$, $w=P_3(k)M_1(k)$ in Lemma 4.1 and in Lemma 5.1,
\begin{ex} $2^m(3141)^n32$ are perfectly clustering for any $m\geq 3$ and $n\geq 2$. \end{ex}
\item For all $r=n$ in \cite{ngon}, $w=P_{k,1,1}$, $w=P_{k,n-i,i+1}P_{k,i+1,n-i}$, $w=M_{k,n+1-i,i-1}M_{k,i-1,n+1-i}$ in Theorem 12; \begin{ex} $5252434252516152516161525161$ is perfectly clustering. \end{ex}.
  \end{itemize} 
  
 For other permutations, we shall describe in \cite{fie}  an algorithm generalizing the one in \cite{fz1}.  We also construct an example of an interval exchange transformation which does not produce infinitely many $ww$. For the permutation $\pi 1=4, \pi 2=3, \pi 3=1, \pi 4=2$, examples can be found in Theorem 5.2 of \cite{fz1}, with $w=P_{1,q_n}M_{2,q_n}$, $w=P_{2,q_n}M_{3,q_n}$,  $w=P_{3,q_n}M_{1,q_n}$, \begin{ex} $4123231312412$ is $\pi$-clustering,\end{ex} We remark that our self-dual induction algorithms for aperiodic  interval exchange transformations generate families of nested clustering words with increasing length, and thus may be more efficient in producing very long clustering words than the more immediate algorithm using discrete interval exchange transformations.

\end{document}